\newtheorem{thm}{Theorem}[section]
\newtheorem{lem}[thm]{Lemma}
\newtheorem{prb}[thm]{Problem}
\makeatletter \@addtoreset{equation}{section} \makeatother
\begin{document}

\begin{center}
{\Large\bf On the Existence of General Factors\\[8pt]
in Regular Graphs}
\end{center}

\begin{center}
Hongliang Lu$^{1}$, David G. L. Wang$^{2}$ and Qinglin Yu$^{3}$\\[6pt]

$^{1}$Department of Mathematics\\
$^{1}$Xi'an Jiaotong University, Xi'an 710049, P. R. China\\
{\tt $^{1}$luhongliang@mail.xjtu.edu.cn}

$^{2}$Beijing International Center for Mathematical Research\\
$^{2}$Peking University, Beijing 100871, P. R. China\\
{\tt $^{2}$wgl@math.pku.edu.cn}

$^{3}$Department of Mathematics and Statistics\\
$^{3}$Thompson Rivers University, Kamloops, BC, Canada\\
{\tt $^{3}$yu@tru.ca}
\end{center}

\begin{abstract}
Let $G$ be a graph, and $H\colon V(G)\to 2^\mathbb{N}$ a set function associated with~$G$. 
A spanning subgraph~$F$ of~$G$ is called an $H$-factor if the degree of any vertex~$v$
in~$F$ belongs to the set~$H(v)$. 
This paper contains two results on the existence of $H$-factors in regular graphs.
First, we construct an $r$-regular graph without some given $H^*$-factor.
In particular, this gives a negative answer to a problem recently posed by
Akbari and Kano.
Second, by using Lov\'asz's characterization theorem on the existence of $(g,f)$-factors,
we find a sharp condition for the existence of general $H$-factors in $\{r,\,r+1\}$-graphs,
in terms of the maximum and minimum of~$H$.
The result reduces to Thomassen's theorem
for the case that $H(v)$ consists of the same two consecutive integers for all vertices~$v$, 
and to Tutte's theorem if the graph is regular in addition.
\end{abstract}

\noindent\textbf{Keywords:} 
$H$-factor; 
$\{k,\,r-k\}$-factor;
regular graph

\noindent\textbf{2010 AMS Classification:} 05C75

\section{Introduction}

Let~$G=(V(G),\,E(G))$ be a simple graph, where~$V(G)$ and~$E(G)$
denote the set of vertices and edges of~$G$
respectively. For any vertex~$v$, denote the degree of~$v$ by~$d_G(v)$.
Let~$2^\mathbb{N}$ denote the collection of sets of nonnegative integers.
We call
\[
H\colon V(G)\to 2^\mathbb{N}
\] 
a {\em set function associated with~$G$} if $H(v)\subseteq\{0,\,1,\,\ldots,\,d_G(v)\}$.
A spanning subgraph~$F$ of~$G$ is called an {\em $H$-factor} 
if $d_F(v)\in H(v)$ for all~$v$. 
It is often that~$H(v)$ coincides with some set~$H'$ for all~$v$.
In this case, we call~$H'$ a {\em set associated with~$G$},
and call~$F$ an {\em $H'$-factor} without confusion.
Let 
\[
g,\, f\colon V(G)\to\mathbb{Z}
\] 
be two functions such that $g(v)\le f(v)$ for all~$v$.
An $H$-factor is called a {\em $(g,f)$-factor} 
if~$H(v)$ is the interval $[g(v),f(v)]$ for all~$v$. 
A $(g,f)$-factor is called an {\em $[a,b]$-factor} 
if $g(v)=a$ and $f(v)=b$ for all~$v$. 
An $[a,b]$-factor~$F$ is called an {\em $(a,b)$-parity-factor}
if 
\[
d_F(v)\equiv a\equiv b\pmod 2\qquad\hbox{for every vertex }v.
\] 
In particular,
$F$~is called a {\em $k$-factor} if $a=b=k$.

A graph is said to be {\em $r$-regular} if every vertex has degree~$r$.
This paper is concerned with the existence of $H$-factors in regular graphs.
The study on the existence of factors in regular graphs was started, to the best of our knowledge, 
from Petersen~\cite{Pet1891}.

\begin{thm}[Petersen]\label{thm-Petersen}
Let $r$ and $k$ be even integers such that $1\le k\le r$. Then any $r$-regular
graph has a $k$-factor.
\end{thm}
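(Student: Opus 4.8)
\noindent\textit{Proof proposal.}
The plan is to use the classical Eulerian-orientation argument together with K\"onig's edge-colouring theorem for bipartite graphs. Since a $k$-factor of $G$ is obtained by taking a $k$-factor in each component, I would first reduce to the case that $G$ is connected; as $r$ is even and $r\ge k\ge 2$, the graph $G$ has no isolated vertex and every vertex has even degree, so $G$ is Eulerian.

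Next I would fix an Eulerian circuit of $G$ and orient each edge in the direction in which it is traversed. In the resulting digraph every vertex $v$ has out-degree $d^+(v)=r/2$ and in-degree $d^-(v)=r/2$. Form a bipartite graph $B$ with parts $V^+=\{v^+\colon v\in V(G)\}$ and $V^-=\{v^-\colon v\in V(G)\}$, joining $u^+$ to $w^-$ exactly when the digraph contains the arc from $u$ to $w$. Since $G$ is simple, $B$ is a simple bipartite graph, and it is $(r/2)$-regular because $d_B(v^+)=d^+(v)=r/2$ and $d_B(v^-)=d^-(v)=r/2$ for every $v$.

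By K\"onig's edge-colouring theorem, the edge set of the $(r/2)$-regular bipartite graph $B$ partitions into $r/2$ perfect matchings $M_1,\dots,M_{r/2}$. Since $k$ is even and $k\le r$, we have $k/2\le r/2$; put $M=M_1\cup\cdots\cup M_{k/2}$, a $(k/2)$-regular spanning subgraph of $B$. Because edges of $G$ correspond bijectively to arcs of the digraph and hence to edges of $B$, the set of edges of $G$ underlying the arcs selected by $M$ spans a subgraph $F$ of $G$ in which each vertex $v$ has degree $d_M(v^+)+d_M(v^-)=k/2+k/2=k$. Thus $F$ is the desired $k$-factor.

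The only point requiring care is the bookkeeping in this last step: one must check that pulling a $(k/2)$-regular subgraph of $B$ back through the orientation really yields degree exactly $k$ at every vertex of $G$, with no edge counted twice. This is immediate from the bijection ``edge of $G$ $\leftrightarrow$ arc $\leftrightarrow$ edge of $B$'' and the fact that $G$ has no loops, so each arc is incident to a vertex $v$ either via $v^+$ or via $v^-$ but not both. Everything else---that the Eulerian orientation balances in- and out-degrees, and that K\"onig's theorem applies to a regular bipartite graph---is standard, so I do not anticipate a serious obstacle.
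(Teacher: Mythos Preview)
The paper does not give its own proof of Theorem~\ref{thm-Petersen}; it merely quotes Petersen's theorem as classical background with a reference to~\cite{Pet1891}. So there is no in-paper argument to compare against.

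That said, your proposal is correct and is essentially the standard modern proof of Petersen's theorem: balance the degrees via an Eulerian orientation, pass to the bipartite double, and use K\"onig's theorem to peel off perfect matchings. The bookkeeping you flag in the last paragraph is exactly the right point to check, and your explanation handles it. One minor remark: you do not actually need $G$ (and hence $B$) to be simple, since K\"onig's edge-colouring theorem holds for bipartite multigraphs as well; this means your argument in fact proves Petersen's theorem in the generality in which it is usually stated.
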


In contrast with even-factors in Theorem~\ref{thm-Petersen}, 
Gallai~\cite{Gal50} obtained the next result for odd-factors.
For any graph~$G$, 
we call the number $|V(G)|$ of vertices the {\em order} of~$G$,
denoted alternatively by $|G|$ as usual.

\begin{thm}[Gallai]\label{thm-Gallai}
Let $r$, $k$ and $m$ be integers such that $r$ is even, $k$ is odd and
\[
{r\over m}\le k\le r\biggl(1-{1\over m}\biggr).
\]
Then any $m$-edge-connected $r$-regular graph of even order has a $k$-factor.
\end{thm}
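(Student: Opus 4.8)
The plan is to verify the Tutte $f$-factor condition for the constant function $f\equiv k$: by that theorem $G$ has a $k$-factor if and only if
\[
\eta(S,T):=k|S|-k|T|+\sum_{v\in T}d_{G-S}(v)-q(S,T)\ \geq\ 0
\]
for all disjoint $S,T\subseteq V(G)$, where $q(S,T)$ counts the components $C$ of $G-(S\cup T)$ for which $k|V(C)|+e_G(V(C),T)$ is odd. Since $G$ is $r$-regular one has $\sum_{v\in T}d_{G-S}(v)=r|T|-e_G(S,T)$, so $\eta(S,T)=k|S|+(r-k)|T|-e_G(S,T)-q(S,T)$ and the target inequality becomes the more symmetric
\[
q(S,T)\ \leq\ k|S|+(r-k)|T|-e_G(S,T).
\]
I would first dispose of $S=T=\emptyset$: since $G$ is connected, $|V(G)|$ is even and $k$ is odd, $k|V(G)|$ is even, so $q=0$ and $\eta=0$; this case also absorbs the handshake obstruction.

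For $S\cup T\neq\emptyset$, put $H=V(G)\setminus(S\cup T)$ and, for a component $C$ of $G[H]$, write $s_C=e_G(V(C),S)$ and $t_C=e_G(V(C),T)$. As $C$ is a component of $G-S-T$ and $V(C)$ is a nonempty proper vertex subset, the edge cut isolating $V(C)$ consists exactly of the $s_C+t_C$ edges from $V(C)$ to $S\cup T$, so $m$-edge-connectivity gives $s_C+t_C\geq m$. The crux is the elementary estimate
\[
k\,s_C+(r-k)\,t_C\ \geq\ r\qquad\text{for every component }C,
\]
proved in three cases: if $s_C,t_C\geq1$ the left side is at least $k+(r-k)=r$ (here $1\le k\le r-1$, which follows from $1\le m\le r$); if $t_C=0$ then $s_C\geq m$ and $k\geq r/m$ give $k\,s_C\geq km\geq r$; and if $s_C=0$ then, symmetrically, $t_C\geq m$ and $r-k\geq r/m$ — i.e.\ $k\leq r(1-1/m)$ — give $(r-k)\,t_C\geq r$. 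Note that the parity of $C$ plays no role here; it only decides membership in $q(S,T)$.

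Finally I would sum this estimate over the $q(S,T)$ components counted by $q$ and then enlarge the range to all components of $G[H]$, each summand being nonnegative, obtaining
\[
r\,q(S,T)\ \leq\ \sum_{C}\bigl(k\,s_C+(r-k)\,t_C\bigr)\ =\ k\,e_G(S,H)+(r-k)\,e_G(T,H).
\]
Substituting $e_G(S,H)=r|S|-2e_G(S)-e_G(S,T)$ and $e_G(T,H)=r|T|-2e_G(T)-e_G(S,T)$, dropping the nonnegative terms $2k\,e_G(S)$ and $2(r-k)\,e_G(T)$, and dividing by $r>0$ gives exactly $q(S,T)\leq k|S|+(r-k)|T|-e_G(S,T)$, i.e.\ $\eta(S,T)\geq0$. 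The step needing the most care is this last piece of bookkeeping — one must keep the estimates tight enough that each of the two hypotheses on $k$ is spent exactly once, in the two symmetric cases above — but there is no substantial obstacle: the load-bearing observations are the reformulation of $\eta$ and the per-component inequality, and the mildly surprising point is that the parity hypothesis is needed only for the trivial case $S=T=\emptyset$, not for the quantitative heart of the argument.
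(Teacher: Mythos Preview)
The paper does not give its own proof of this theorem: it is stated as background and attributed to Gallai~\cite{Gal50}, so there is nothing in the paper to compare your argument against.

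That said, your proof is correct and is the standard modern route to Gallai's theorem. The reformulation $\eta(S,T)=k|S|+(r-k)|T|-e_G(S,T)-q(S,T)$ is right, the per-component estimate $ks_C+(r-k)t_C\ge r$ is exactly where the two bounds on $k$ enter (and only there), and the final bookkeeping via $e_G(S,H)=r|S|-2e_G(S)-e_G(S,T)$ is clean and handles the degenerate case $H=\emptyset$ as well. Two small points worth making explicit for a polished write-up: (i) your treatment of $S=T=\emptyset$ silently uses that $G$ is connected, which follows from $m$-edge-connectivity with $m\ge2$ (the hypothesis forces $m\ge2$, else the interval for $k$ is empty); (ii) the inequality $1\le k\le r-1$ used in the case $s_C,t_C\ge1$ follows because $k\ge r/m\ge1$ and $r-k\ge r/m\ge1$, given that the edge-connectivity of an $r$-regular graph is at most $r$.
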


It is clear that having an odd-factor implies that the order of the graph must be even.
So the ``even order'' condition in Theorem~\ref{thm-Gallai} is not a real restriction.
Removing the parity conditions for both $r$ and $k$, 
Tutte~\cite{Tutte78} gave the following theorem.

\begin{thm}[Tutte]\label{thm-Tutte}
Let $1\le k\le r-1$. Then any
$r$-regular graph has a $\{k,\,k+1\}$-factor.
\end{thm}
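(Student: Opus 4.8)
\emph{Proof proposal.} The plan is to realise a $\{k,\,k+1\}$-factor as a $(g,f)$-factor for the \emph{constant} functions $g\equiv k$ and $f\equiv k+1$, and then to apply Lov\'asz's characterisation of the existence of $(g,f)$-factors (the same tool the paper uses for its second main result). That theorem states that $G$ has a $(g,f)$-factor if and only if
\[
\gamma(S,T):=\sum_{v\in S}f(v)+\sum_{v\in T}\bigl(d_{G-S}(v)-g(v)\bigr)-q(S,T)\ \ge\ 0
\]
for every pair of disjoint sets $S,T\subseteq V(G)$, where $q(S,T)$ is the number of components $C$ of $G-(S\cup T)$ on which $g$ and $f$ agree and for which $f(V(C))+e_G(V(C),T)$ is odd. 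The first observation is that here $g(v)=k<k+1=f(v)$ for \emph{every} vertex, so there is no such component and $q\equiv0$; only the main term has to be controlled.

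Next I would use $r$-regularity to rewrite $\sum_{v\in T}d_{G-S}(v)=r|T|-e_G(S,T)$, where $e_G(S,T)$ is the number of edges joining $S$ and $T$, so that $\gamma(S,T)\ge0$ becomes
\[
(k+1)|S|+(r-k)|T|\ \ge\ e_G(S,T).
\]
For a fixed $S$, the left-hand side is affine in $|T|$ with slope $r-k\ge1$, while $e_G(S,T)=\sum_{v\in T}e_G(\{v\},S)$; hence the worst case is the extremal set $T=\{\,v\notin S:\ d_{G-S}(v)<k\,\}$, and it suffices to verify the inequality there. Writing $s=|S|$ and $t=|T|$, one then has two bounds on the right-hand side: $e_G(S,T)\le rs$, since every such edge meets $S$, and $e_G(S,T)\le\min\{r,s\}\cdot t$, since a vertex of $T$ has at most $\min\{r,s\}$ neighbours in $S$. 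Thus everything reduces to the purely arithmetic inequality
\[
\min\bigl\{\,rs,\ \min\{r,s\}\cdot t\,\bigr\}\ \le\ (k+1)s+(r-k)t
\]
for all integers $s,t\ge0$.

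Establishing this last inequality is elementary but is where the real work lies: it calls for a short case analysis according to whether $s\ge r$ or $s<r$, and, inside each, whether the effective upper bound on $e_G(S,T)$ is $rs$ or $\min\{r,s\}\cdot t$. In every branch the inequality collapses to something like $(r-k-1)s\le(r-k)t$ or $2r\ge k+1$, and the hypothesis $1\le k\le r-1$, equivalently $r-k-1\ge0$ and $r\ge2$, is exactly what makes each branch go through. As a sanity check, when $r$ and $k$ are both even a $k$-factor is already a $\{k,\,k+1\}$-factor, so Petersen's theorem (Theorem~\ref{thm-Petersen}) settles that case outright. Finally, a self-contained route avoiding Lov\'asz's theorem would be to choose a spanning subgraph $F$ with $d_F(v)\le k+1$ for all $v$ that minimises the total deficiency $\sum_v\max\{0,\,k-d_F(v)\}$; if this is positive, one pushes along an $F$-alternating walk starting at a deficient vertex (a non-$F$ edge there must lead to a vertex of degree $k+1$, and iterating yields either a direct degree augmentation or, via a parity count along the walk, a contradiction with minimality), the delicate step being the treatment of non-simple alternating walks.
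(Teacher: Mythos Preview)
The paper does not give a direct proof of Theorem~\ref{thm-Tutte}; it is quoted as Tutte's classical result. What the paper does prove is Lemma~\ref{lem-U}, explicitly remarked to be a generalisation, and specialising that proof to $U=\emptyset$ yields a proof of Theorem~\ref{thm-Tutte}. Your plan matches this specialisation step for step: apply Lov\'asz's $(g,f)$-factor theorem with $g\equiv k$ and $f\equiv k+1$, note that $q^*(S,T)=0$ since $g<f$ everywhere, and reduce to the inequality $(k+1)\,|S|+(r-k)\,|T|\ge e_G(S,T)$.

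The only real difference is how that last inequality is established. Your route---bound $e_G(S,T)$ by $\min\{r|S|,\ \min(r,|S|)\cdot|T|\}$ and split into four cases---does go through (the branch $s<r$, $t\le r$ indeed reduces, after rearranging $(s-(r-k))(t-(k+1))\le(k+1)(r-k)$, to something equivalent to $k+1\le 2r$), though the ``worst-case $T$'' remark is never actually used once you pass to the purely arithmetic statement. The paper's computation is a one-liner: it writes
\[
(k+1)\,|S|+(r-k)\,|T|=\sum_{s\in S}d_G(s)\cdot\frac{k+1}{r}+\sum_{t\in T}d_G(t)\cdot\frac{r-k}{r},
\]
discards the (nonnegative) contribution of every edge not in $E_G(S,T)$, and observes that each remaining edge contributes $\tfrac{k+1}{r}+\tfrac{r-k}{r}=\tfrac{r+1}{r}>1$, giving $\gamma(S,T)\ge \tfrac{1}{r}\,e_G(S,T)\ge 0$ with no case analysis. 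This fractional-weight trick is also what lets the argument extend uniformly to the $\{r,r+1\}$-graph setting of Lemma~\ref{lem-U}. Your closing alternating-walk sketch is too vague to assess as written.
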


A graph~$G$ is said to be an {\em $\{r,\,r+1\}$-graph} if every vertex of~$G$ has degree~$r$ or~$r+1$.
Thomassen~\cite{Tho81} generalized Theorem~\ref{thm-Tutte}
by considering $\{r,\,r+1\}$-graphs.

\begin{thm}[Thomassen]\label{thm-Thomassen}
Let $1\le k\le r-1$.
Then any $\{r,\,r+1\}$-graph has a $\{k,\,k+1\}$-factor.
\end{thm}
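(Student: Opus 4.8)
My plan is to deduce Theorem~\ref{thm-Thomassen} from Lov\'asz's $(g,f)$-factor theorem. The key opening move is to reinterpret a $\{k,\,k+1\}$-factor of a graph~$G$ as a $(g,f)$-factor for the constant functions $g\equiv k$ and $f\equiv k+1$. Since $g(v)<f(v)$ at every vertex, no component of $G-(S\cup T)$ can satisfy $g\equiv f$ on it, so the ``odd component'' term of Lov\'asz's deficiency formula is identically zero. Hence $G$ has a $\{k,\,k+1\}$-factor if and only if
\[
\Phi(S,T):=(k+1)\,|S|\;-\;k\,|T|\;+\;\sum_{v\in T}d_{G-S}(v)\ \ge\ 0
\qquad\text{for all disjoint }S,T\subseteq V(G).
\]
So the whole problem is reduced to checking this numerical inequality for an $\{r,\,r+1\}$-graph~$G$ with $1\le k\le r-1$.

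To check it, I would fix disjoint $S,T$ and partition $T=T_0\cup T_1$, where $T_0=\{v\in T:\ d_{G-S}(v)\le k-1\}$. Every $v\in T_1$ contributes $d_{G-S}(v)-k\ge 0$, so those vertices only help and may be discarded. For $v\in T_0$ one uses $d_{G-S}(v)=d_G(v)-|N_G(v)\cap S|\ge r-|N_G(v)\cap S|$ and sums over~$T_0$, which gives $\sum_{v\in T_0}d_{G-S}(v)\ge r|T_0|-e_G(S,T_0)$ and reduces everything to
\[
(k+1)\,|S|\;+\;(r-k)\,|T_0|\ \ge\ e_G(S,T_0),
\]
where $e_G(S,T_0)$ denotes the number of edges between $S$ and $T_0$. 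Here is the only real computation, and the step I expect to be the crux: because every vertex of~$G$ has degree at most $r+1$, counting these edges through either endpoint yields $e_G(S,T_0)\le (r+1)\min(|S|,|T_0|)$. If $|S|\le|T_0|$, substituting $e_G(S,T_0)\le(r+1)|S|$ leaves $(r-k)(|T_0|-|S|)\ge 0$, true since $k\le r$ and $|T_0|\ge|S|$; if $|S|>|T_0|$, substituting $e_G(S,T_0)\le(r+1)|T_0|$ leaves $(k+1)(|S|-|T_0|)\ge 0$, which is immediate. Thus $\Phi(S,T)\ge 0$ always, and the theorem follows.

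The place I would concentrate care is precisely that last dichotomy: one must choose the bound on $e_G(S,T_0)$ that makes \emph{both} cases collapse, and it is exactly the hypothesis $d_G(v)\le r+1$ --- paired with $d_G(v)\ge r$, which produces the $r|T_0|$ term --- that makes the $\min(|S|,|T_0|)$ estimate available. For contrast, one could instead try to derive the theorem from Theorem~\ref{thm-Tutte} by regularizing~$G$ to an $(r+1)$-regular graph, for instance by taking two copies of~$G$ and joining the two copies of each degree-$r$ vertex by an edge; but a $\{k,\,k+1\}$-factor of the enlarged graph need not restrict to one of~$G$, since deleting an added edge can drop a degree to $k-1$, and repairing this seems to require an alternating-path argument no shorter than the direct computation above. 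I would also record the promised special cases: an $r$-regular graph is in particular an $\{r,\,r+1\}$-graph, so the statement specializes to Theorem~\ref{thm-Tutte}; and $k=0$ (an empty matching) and $k=r$ (the graph~$G$ itself) are trivial, so the substance genuinely lies in the range $1\le k\le r-1$.
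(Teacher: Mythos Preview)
The paper does not give its own proof of Theorem~\ref{thm-Thomassen}; it simply quotes the result from Thomassen~\cite{Tho81} and uses it as a black box in the proof of Theorem~\ref{thm-main}. So there is nothing to compare your argument against inside this paper.

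That said, your proof is correct and is essentially Thomassen's original one: since $g\equiv k<k+1\equiv f$, the odd-component count $q^*(S,T)$ in Theorem~\ref{thm-Lovasz-gf} is identically zero, and the remaining inequality reduces to $(k+1)|S|+(r-k)|T_0|\ge e_G(S,T_0)$, which follows from $e_G(S,T_0)\le(r+1)\min(|S|,|T_0|)$ by your two-case split. The computation is clean and the reduction to $T_0$ is the right move. Your remark about the regularization alternative is also accurate: doubling $G$ and applying Theorem~\ref{thm-Tutte} does not immediately work, for exactly the reason you give.
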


For more results along this line, 
the reader is referred to Akiyama and Kano's book~\cite{AK11B}. 
Recently, Akbari and Kano~\cite{AK} considered the existence of
$\{k,\,r-k\}$-factors in $r$-regular graphs.

\begin{thm}[Akbari-Kano]\label{thm-AK}
Let $r$ and $k$ be integers such that $r$ is odd, $k$ is even
and $1\le k\le r$.
Then any $r$-regular graph has a $\{k,\,r-k\}$-factor.
\end{thm}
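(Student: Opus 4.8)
The plan is to derive Theorem~\ref{thm-AK} from the classical theorem of Lov\'asz characterising the existence of $(g,f)$-factors: first produce a spanning subgraph whose degrees all lie in the \emph{interval} $[\,\min\{k,r-k\},\,\max\{k,r-k\}\,]$, and then perturb it until every degree is pushed to one of the two endpoints of that interval. Throughout put $g=\min\{k,r-k\}$ and $f=\max\{k,r-k\}$, so that $g+f=r$ and, since $k$ and $r-k$ have opposite parities, $1\le g<f\le r-1$.

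\textbf{Step 1: a $(g,f)$-factor exists.} I would first show that $G$ has a spanning subgraph $F_0$ with $g\le d_{F_0}(v)\le f$ for every $v$. Because $g<f$ at each vertex, the ``odd-component'' correction term in Lov\'asz's criterion vanishes identically, so it is enough to verify
\[
f\,|S|+\sum_{v\in T}d_{G-S}(v)-g\,|T|\ \ge\ 0 \qquad\text{for all disjoint }S,T\subseteq V(G).
\]
Since $\sum_{v\in T}d_{G-S}(v)=r|T|-e_G(S,T)\ge\max\{0,\ r(|T|-|S|)\}$ and $f+g=r$, the left-hand side is $\ge f|S|\ge0$ if $T=\emptyset$, is $\ge(f-g)|T|\ge0$ if $T\neq\emptyset$ and $|S|\ge|T|$, and is $\ge -g|S|+f|T|\ge(f-g)|T|\ge0$ if $|S|<|T|$. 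Hence the inequality always holds and $F_0$ exists.

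\textbf{Step 2: an extremal choice.} Among all $(g,f)$-factors of $G$, I would then pick $F_0$ minimising
\[
\Phi(F)=\sum_{v\in V(G)}d_F(v)\bigl(r-d_F(v)\bigr).
\]
The function $x\mapsto x(r-x)$ is strictly concave and symmetric about $r/2$; as $g+f=r$, its values over the integers of $[g,f]$ are minimised exactly at $x=g$ and $x=f$. Thus it suffices to prove $d_{F_0}(v)\in\{g,f\}$ for every $v$, for then $d_{F_0}(v)\in\{k,r-k\}$ for every $v$ and $F_0$ is the desired $\{k,r-k\}$-factor.

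\textbf{Step 3: no vertex lies strictly between $g$ and $f$.} It remains to exclude a vertex $v$ with $g<d_{F_0}(v)<f$. If such ``middle'' vertices existed, I would contradict the minimality of $\Phi(F_0)$: deleting one edge of $F_0$, or switching edges along an $F_0$-alternating path joining a vertex of $F_0$-degree exceeding $g$ to one of $F_0$-degree below $f$, again produces a $(g,f)$-factor, and an elementary computation shows such a switch strictly decreases $\Phi$ as soon as the degree that drops does not exceed the degree that rises. The crux---and the step I expect to be genuinely delicate---is to guarantee that, whenever middle vertices are present, some improving switch of this kind exists. It is exactly here that the hypothesis ``$r$ odd'' (equivalently, $|V(G)|$ even) is used: it rules out the parity obstruction responsible for the failure of the analogous statement when $r$ is even (for instance $K_5$ has no $\{1,3\}$-factor, since a spanning subgraph with all degrees odd must have an even number of vertices). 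Concretely, I would run a parity count on the degree sequence of the $\Phi$-minimiser to see that the middle vertices occur in a configuration that can be paired off and resolved, and I would extract the required alternating paths from the exchange structure of the family of $(g,f)$-factors; the rest is routine bookkeeping.
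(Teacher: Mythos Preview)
The paper does not prove Theorem~\ref{thm-AK}; it is stated with attribution to Akbari and Kano~\cite{AK}, so there is no in-paper argument to compare against and your attempt must stand on its own.

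Step~1 is correct: since $g(v)<f(v)$ at every vertex, the term $q^*(S,T)$ in Theorem~\ref{thm-Lovasz-gf} is identically zero, and the remaining inequality $f|S|+(r-g)|T|-e_G(S,T)\ge0$ follows from $g+f=r$ together with $e_G(S,T)\le r\min\{|S|,|T|\}$, just as you indicate. Step~2 is a legitimate extremal setup.

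Step~3 is where the argument breaks down, and the missing piece is not ``routine bookkeeping''. Suppose the $\Phi$-minimiser $F_0$ has exactly one middle vertex $v$, every other vertex having $F_0$-degree $g$ or $f$; nothing you have written---including the parity of $|V(G)|$---excludes this. To delete an $F_0$-edge $vu$ you need $d_{F_0}(u)>g$, hence $d_{F_0}(u)=f$, and the resulting change in $\Phi$ is
\[
(2d_v-r-1)+(2f-r-1)=2(d_v-g-1)\ge0;
\]
to add a non-$F_0$-edge $vu$ you need $d_{F_0}(u)=g$, and the change is $2(f-d_v-1)\ge0$. So no single-edge flip strictly lowers $\Phi$. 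Your alternating-path move fares no better: to drop $d_{F_0}(v)$ and raise some $d_{F_0}(w)$ with a strict gain you need $d_{F_0}(v)\le d_{F_0}(w)<f$, forcing $w$ to be a second middle vertex that does not exist; the symmetric case is blocked in the same way. Thus your descent argument can stall with a single middle vertex remaining, and ruling out that configuration is essentially the content of the theorem. The hypothesis ``$r$ odd'' has to enter more substantively than through $|V(G)|$ being even; as it stands, Step~3 is a hope rather than a proof.
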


By Theorems~\ref{thm-Petersen}, \ref{thm-Tutte} and~\ref{thm-AK},
any $r$-regular graph has a
$\{k,\,r-k\}$-factor as if $k$ is even. 
For odd~$k$, 
Akbari and Kano~\cite{AK} posed the next problem for the case $r$ is even, 
and a conjecture for the case that $r$ is odd.

\begin{prb}[Akbari-Kano]\label{prb-AK}
Let $r$ and $k$ be integers such that $r$ is even, $k$ is odd and $1\le k\le r/2-1$. 
Is it true that every connected $r$-regular simple graph of
even order has a $\{k,\,r-k\}$-factor?
\end{prb}

Again, the ``even order'' condition is not a real restriction. 
On the other hand, any $r$-regular graph of even order has an $r/2$-factor.
This can be seen immediately from Theorem~\ref{thm-Gallai}
if one notices that any even-regular graph is $2$-edge connected.
Therefore, the condition $1\le k\le r/2-1$ is not a real restriction either.

The first aim of this paper is to give a negative answer to Problem~\ref{prb-AK}.
In Section~\ref{sec-prb-AK}, we construct an $r$-regular graph~$G^*$ without
$\{k,\,r-k\}$-factors for all $1\le k\le r/2-2$,
and deal with the case $k=r/2-1$ by using the following Lov\'asz's characterization~\cite{Lov72}
(see also~\cite[Theorem 6.1]{AK11B}) on parity-factors.
For any two subsets~$S$ and~$T$ of~$V(G)$, 
denote by $E_G(S,T)$ the set of edges 
with one end in $S$ and the other end in $T$. Denote
\[
e_G(S,T)=|E_G(S,T)|.
\]

\begin{thm}[Lov\'asz]\label{thm-Lovasz-parity}
Let $G$ be a graph, and $g,f\colon V(G)\to\mathbb{Z}$ be functions
such that $g(v)\le f(v)$ and
$g(v)\equiv f(v)\pmod 2$ for all vertices~$v$. Then
$G$ has a $(g,f)$-parity-factor if and only if
\begin{equation}\label{ineq-Lovasz-parity}
\eta(S, T)=\sum_{s\in S}f(s)+\sum_{t\in T}\bigl(d_G(t)-g(t)\bigr)-e_G(S,T)-q(S,T)\ge 0
\end{equation}
for all disjoint subsets $S$ and $T$ of $V(G)$, 
where $q(S, T)$ denotes the
number of components $C$ of the graph $G-S-T$ such that
\begin{equation}\label{eq-mod}
\sum_{c\,\in V(C)}f(c)+e_G(V(C),\,T)\equiv  1 \pmod 2.
\end{equation}
\end{thm}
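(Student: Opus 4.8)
The plan is to prove necessity by a direct deficiency count and sufficiency by reducing to Tutte's $1$-factor theorem via an auxiliary graph, the converse carrying the real work. For necessity, suppose $G$ has a $(g,f)$-parity-factor $F$, fix disjoint $S,T\subseteq V(G)$, and write $\bar F$ for the spanning subgraph $(V(G),\,E(G)\setminus E(F))$. I would start from
\[
\Delta=\sum_{s\in S}\bigl(f(s)-d_F(s)\bigr)+\sum_{t\in T}\bigl(d_F(t)-g(t)\bigr)\ge 0,
\]
the nonnegativity being immediate from $g(v)\le d_F(v)\le f(v)$. Expanding $\eta(S,T)$ and sorting every edge meeting $S\cup T$ according to whether its other endpoint lies in $S$, in $T$, or in a component of $G-S-T$, one reaches an identity of the form
\[
\eta(S,T)=\Delta+2\,e_F(S)+2\,e_{\bar F}(T)+\sum_{C}\bigl(e_F(V(C),S)+e_{\bar F}(V(C),T)\bigr)-q(S,T),
\]
where $C$ ranges over the components of $G-S-T$, the terms $e_F(S)$ and $e_{\bar F}(T)$ count the edges of $F$ inside $S$ and of $\bar F$ inside $T$, and $e_F,e_{\bar F}$ count edges between the indicated sets. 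Reducing $\sum_{c\in V(C)}d_F(c)$ modulo $2$, with $d_F(c)\equiv f(c)\pmod 2$ and $e_F(V(C),T)=e_G(V(C),T)-e_{\bar F}(V(C),T)$, shows that $C$ satisfies \eqref{eq-mod} exactly when $e_F(V(C),S)+e_{\bar F}(V(C),T)$ is odd; each such component therefore contributes at least $1$ to the sum above, and $\eta(S,T)\ge 0$ follows.

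For sufficiency I would pass to an auxiliary graph $G^{*}$ built by replacing each vertex $v$ of $G$ with a gadget, in the spirit of Tutte's reduction of $f$-factors to perfect matchings, whose perfect matchings correspond bijectively to the subsets of the set of edges of $G$ at $v$ of cardinality in $\{g(v),\,g(v)+2,\,\ldots,\,f(v)\}$; the hypothesis $g(v)\equiv f(v)\pmod 2$ is exactly what makes such a gadget available, as it requires $\tfrac12\bigl(f(v)-g(v)\bigr)$ ``flexible'' pairs inside the gadget. By construction, $G$ has a $(g,f)$-parity-factor if and only if $G^{*}$ has a perfect matching, so it suffices to produce such a matching under the hypothesis that \eqref{ineq-Lovasz-parity} holds, which I would extract from Tutte's $1$-factor theorem applied to $G^{*}$.

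The hard part will be the reverse bookkeeping. If $G^{*}$ has no perfect matching, Tutte's theorem supplies $U\subseteq V(G^{*})$ with strictly more than $|U|$ odd components in $G^{*}-U$, and this must be transferred to $G$. I would first normalise $U$ so that at each vertex $v$ it engulfs the whole gadget, avoids it entirely, or meets it in a canonical cut --- the three cases declaring $v\in T$, $v\notin S\cup T$, or $v\in S$ for the partition to be produced --- and then identify the odd components of $G^{*}-U$ with either gadget cut-pieces (feeding $e_G(S,T)$ and the degree terms) or components of $G-S-T$ satisfying \eqref{eq-mod} (feeding $q(S,T)$), so that $\eta(S,T)<0$ can be read off, contradicting \eqref{ineq-Lovasz-parity}. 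I expect the delicate point to be aligning the parity test \eqref{eq-mod} on a component of $G-S-T$ with the parity of the corresponding component of $G^{*}-U$; once that is secured, the remaining inequalities are re-runs of the edge count used for necessity and the analysis of a single gadget is elementary matching theory.
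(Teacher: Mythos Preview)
The paper does not prove this theorem at all: it is quoted as a known result of Lov\'asz, with a reference to~\cite{Lov72} (and to \cite[Theorem 6.1]{AK11B}), and is used only as a tool in the proof of Theorem~\ref{thm-r/2-1}. So there is no ``paper's own proof'' to compare your proposal against.

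That said, your outline follows the classical route to results of this type --- necessity by a direct deficiency count, sufficiency by a gadget reduction to Tutte's $1$-factor theorem --- and the plan is sound. The necessity computation you sketch is correct, including the key parity observation that a component $C$ satisfies \eqref{eq-mod} precisely when $e_F(V(C),S)+e_{\bar F}(V(C),T)$ is odd. For sufficiency, the reduction to perfect matchings via per-vertex gadgets encoding the allowed degree set $\{g(v),g(v)+2,\ldots,f(v)\}$ is exactly how Lov\'asz-type theorems are usually derived; your description of the normalisation of the Tutte set $U$ and the identification of odd components with either gadget pieces or components of $G-S-T$ is the right shape, and the ``delicate point'' you flag --- matching the parity condition \eqref{eq-mod} with the parity of the corresponding component of $G^{*}-U$ --- is indeed where the care is needed. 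None of this is in the present paper, which simply cites the result.
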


In fact, Lov\'asz \cite{Lov72} presented a structural description for the degree
constrained subgraph problem for the case that no two consecutive integers are
missed in~$H(v)$ for every~$v$. He also showed that the problem without this
restriction is NP-complete. 
In particular, the next theorem, which is due to Lov\'asz~\cite{Lov70} 
(see also~\cite[Theorem 4.1]{AK11B}),
will be used in our deduction.

\begin{thm}[Lov\'asz]\label{thm-Lovasz-gf}
Let $G$ be a graph, and $g,\,f \colon  V (G)\to\mathbb{Z}$ be functions such that
$g(v)\le f(v)$ for all vertices $v$. Then $G$ has a $(g,f)$-factor if and
only if
\[
\gamma(S, T)=\sum_{s\in S}f(s)+\sum_{t\in T}\bigl(d_G(t)-g(t)\bigr)-e_G(S,T)-q^*(S, T) \ge0
\]
for all disjoint subsets $S$ and $T$ of $V(G)$, where $q^*(S, T)$ denotes
the number of components $C$ of the graph $G-S-T$ satisfying~(\ref{eq-mod}), and
$g(v)=f(v)$ for all $v \in V(C)$.
\end{thm}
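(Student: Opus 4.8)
The plan is to prove the two implications of Theorem~\ref{thm-Lovasz-gf} separately, via the usual deficiency reformulation of the factor problem. For necessity, suppose $F$ is a $(g,f)$-factor and fix disjoint $S,T\subseteq V(G)$. I would start from $\sum_{s\in S}f(s)\ge\sum_{s\in S}d_F(s)$ and $\sum_{t\in T}(d_G(t)-g(t))\ge\sum_{t\in T}d_{G-F}(t)$, then expand each right-hand side by separating, for every vertex of $S$ (resp.\ $T$), the contributions of the $F$-edges (resp.\ non-$F$-edges) inside $S$ (resp.\ $T$), those to $T$ (resp.\ $S$), and those to the components of $G-S-T$. Adding the two inequalities makes the two $E_G(S,T)$ contributions combine to exactly $e_G(S,T)$, and leaves a ``component surplus'' $\sum_C\bigl(x_C+(e_G(V(C),T)-y_C)\bigr)$, where $x_C$ counts $F$-edges from $V(C)$ to $S$ and $y_C$ those from $V(C)$ to $T$. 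For a component $C$ counted by $q^*(S,T)$ one has $g\equiv f=:k_C$ on $V(C)$ and $k_C|V(C)|+e_G(V(C),T)$ odd, while $\sum_{c\in V(C)}d_F(c)=k_C|V(C)|\equiv x_C+y_C\pmod 2$; combining these parities gives $x_C+(e_G(V(C),T)-y_C)\equiv 1\pmod 2$, hence this nonnegative quantity is at least~$1$. Summing over components yields $\gamma(S,T)\ge 0$.

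For sufficiency I would argue by contradiction: assume $\gamma(S,T)\ge 0$ for all disjoint $S,T$ but $G$ has no $(g,f)$-factor, and choose a spanning subgraph $F$ minimizing the total deficiency $\sum_{v}\bigl(\max\{g(v)-d_F(v),0\}+\max\{d_F(v)-f(v),0\}\bigr)$, which is then positive. Passing to $G-F$ and $(d_G-f,\,d_G-g)$ if necessary, I may assume some vertex $u_0$ has $d_F(u_0)<g(u_0)$. From $u_0$ I would grow the standard alternating-reachability structure: walks $u_0,e_1,v_1,e_2,\dots$ whose edges alternate between $E(G)\setminus E(F)$ and $E(F)$ and each of whose proper initial segments can be switched without creating a new violation. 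If a vertex $w$ with $d_F(w)>g(w)$ of the appropriate type is reached, switching along the trail to $w$ lowers the total deficiency, contradicting minimality. Otherwise, I would read off from the reachable set a pair of disjoint sets $S,T$ (roughly: vertices reached ``the wrong way'' that are now saturated at their lower bound, resp.\ vertices reached that sit at their upper bound), check that $G-S-T$ is exactly the union of the unreachable components, that $g=f$ holds on each such component $C$ (else the search would have entered $C$), and that each such $C$ satisfies~(\ref{eq-mod}) by a parity count on the edges joining $V(C)$ to $S\cup T$. Substituting this $S,T$ into $\gamma$ and using that $u_0$ is deficient and that every unreachable component contributes to $q^*(S,T)$ then forces $\gamma(S,T)<0$, a contradiction; so the deficiency is $0$ and $F$ is a $(g,f)$-factor.

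The necessity half is routine counting. The delicate part, and the main obstacle, is sufficiency: making the alternating-reachability structure precise, proving the augment-or-barrier dichotomy, and carrying out the parity bookkeeping that pins down $S$ and $T$ and forces each unreachable component to have $g=f$ and to satisfy~(\ref{eq-mod}). Since Theorem~\ref{thm-Lovasz-gf} is a classical result of Lov\'asz~\cite{Lov70} (see also~\cite[Theorem~4.1]{AK11B}), a shorter route is simply to invoke it; and one may note that the case $g=f$ on all of $G$ is already a special case of Theorem~\ref{thm-Lovasz-parity}, so only the genuine range case $g<f$ requires the argument above. An alternative to the direct proof is to reduce the $(g,f)$-factor problem on $G$ to an $f'$-factor problem on an auxiliary graph $G^*$ obtained by attaching Tutte-type bipartite gadgets at each vertex, apply the $g=f$ case to $G^*$, and translate the resulting condition back; there the obstacle becomes showing that the extremal sets for $G^*$ can be taken in a canonical form matching a pair $S,T$ in $G$ with the components aligned, so that the condition on $G^*$ collapses to $\gamma(S,T)\ge 0$ with the correct $q^*$.
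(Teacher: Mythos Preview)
The paper does not prove Theorem~\ref{thm-Lovasz-gf} at all: it is stated with attribution to Lov\'asz~\cite{Lov70} (and to~\cite[Theorem~4.1]{AK11B}) and then used as a black box in the proof of Lemma~\ref{lem-U}. So there is no ``paper's own proof'' to compare against; the route the paper takes is exactly the ``shorter route'' you yourself mention, namely to invoke the result as classical.

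Your sketch goes well beyond what the paper does, and the necessity half is essentially correct. One small slip: you write ``$g\equiv f=:k_C$ on $V(C)$'' and then use $k_C|V(C)|$, which implicitly treats $f$ as constant on $V(C)$. The definition of $q^*(S,T)$ only forces $g(v)=f(v)$ for each $v\in V(C)$, not that these values agree across $V(C)$; the correct quantity is $\sum_{c\in V(C)}f(c)$. With that fix your parity computation goes through verbatim, since for a $(g,f)$-factor $F$ one has $d_F(c)=f(c)$ on such a component, whence $\sum_{c}f(c)\equiv x_C+y_C\pmod 2$ and~(\ref{eq-mod}) gives $x_C+(e_G(V(C),T)-y_C)\equiv 1\pmod 2$.

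For sufficiency you are honest that the alternating-reachability argument is only an outline and that the ``augment-or-barrier'' dichotomy with the attendant parity bookkeeping is the real work. That is accurate: turning your sketch into a proof is essentially reproving Lov\'asz's theorem, and the details (handling vertices with $g(v)<d_F(v)<f(v)$, ensuring the extracted $S,T$ really make $\gamma(S,T)<0$, and verifying that every relevant component has $g=f$ and satisfies~(\ref{eq-mod})) are nontrivial. For the purposes of this paper, citing~\cite{Lov70} or~\cite[Theorem~4.1]{AK11B} is both sufficient and what the authors do.
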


By using Alon's combinatorial nullstellensatz~\cite{Alon99},
Shirazi and Verstra\"ete~\cite{Shi08} established the following brief result
for general $H$-factors,
which was originally posed by 
Addario-Berry et al.~\cite{ADMRT07} as a conjecture.

\begin{thm}[Shirazi-Verstra\"ete]\label{thm-SV}
Let $G$ be a graph with an associated set function~$H$. If 
\begin{equation}\label{cond-SV}
|H(v)|>\biggl\lceil {d_{G}(v)\over 2}\biggr\rceil\qquad\hbox{for all }v\in V(G),
\end{equation}
then $G$ has an $H$-factor.
\end{thm}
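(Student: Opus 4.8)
The plan is to deduce this from Alon's Combinatorial Nullstellensatz~\cite{Alon99}, which is the route taken by Shirazi and Verstra\"ete~\cite{Shi08}. Introduce an indeterminate $x_e$ for each $e\in E(G)$; a $0/1$ assignment to the $x_e$ selects a spanning subgraph $F$ with $d_F(v)=\sum_{e\ni v}x_e$. For each vertex $v$ write $M(v)=\{0,1,\dots,d_G(v)\}\setminus H(v)$ for its set of missing (forbidden) degrees, and set
\[
P(x)=\prod_{v\in V(G)}\;\prod_{j\in M(v)}\Bigl(\sum_{e\ni v}x_e-j\Bigr).
\]
If $P$ is nonzero at some $0/1$ point, the corresponding $F$ is an $H$-factor: indeed $\sum_{e\ni v}x_e$ always lies in $\{0,\dots,d_G(v)\}$, and nonvanishing forces $\sum_{e\ni v}x_e\notin M(v)$, hence $d_F(v)\in H(v)$. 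The hypothesis $|H(v)|>\lceil d_G(v)/2\rceil$ gives $|M(v)|=d_G(v)+1-|H(v)|\le\lfloor d_G(v)/2\rfloor$, so $\deg P\le\sum_v|M(v)|\le\tfrac12\sum_v d_G(v)=|E(G)|$. One may even assume $|M(v)|=\lfloor d_G(v)/2\rfloor$ by shrinking $H$, which only strengthens the conclusion, but this is not needed.

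Next I would pin down a top-degree monomial of $P$ whose coefficient is nonzero. The homogeneous part of $\prod_{j\in M(v)}(\sum_{e\ni v}x_e-j)$ of degree $|M(v)|$ is $\bigl(\sum_{e\ni v}x_e\bigr)^{|M(v)|}$, so the homogeneous part of $P$ of degree $\sum_v|M(v)|$ is $Q(x)=\prod_{v\in V(G)}\bigl(\sum_{e\ni v}x_e\bigr)^{|M(v)|}$. Expanding $Q$ over $\mathbb{Q}$, the coefficient of a squarefree monomial $\prod_{e\in E'}x_e$ (with $E'\subseteq E(G)$) counts, with positive multinomial weights, the ways of choosing for each $v$ a set of $|M(v)|$ incident edges so that no edge is chosen at both of its endpoints; every such choice forces $|E'|=\sum_v|M(v)|$, so this monomial is of top degree, its coefficient in $P$ equals its coefficient in $Q$, and it is nonzero as soon as one admissible choice exists. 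Such a choice is exactly a subgraph of the $V(G)$--$E(G)$ incidence bipartite graph in which each $v$ has degree $|M(v)|$ and each $e$ has degree at most $1$; by the deficiency form of Hall's theorem (equivalently, max-flow/min-cut) it exists iff $\sum_{v\in A}|M(v)|\le e_G(A)+e_G(A,V(G)\setminus A)$ for every $A\subseteq V(G)$. Since $e_G(A)+e_G(A,V(G)\setminus A)=\sum_{v\in A}d_G(v)-e_G(A)\ge\tfrac12\sum_{v\in A}d_G(v)\ge\sum_{v\in A}\lfloor d_G(v)/2\rfloor\ge\sum_{v\in A}|M(v)|$, the condition holds and an admissible $E'$ exists.

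Finally I would apply the Combinatorial Nullstellensatz with $t_e=1$ for $e\in E'$ and $t_e=0$ otherwise, and with each $S_e=\{0,1\}$: we have $\deg P=\sum_e t_e$, the coefficient of $\prod_e x_e^{t_e}=\prod_{e\in E'}x_e$ in $P$ is nonzero, and $|S_e|=2>t_e$, so $P$ is nonzero at some $0/1$ point, which yields the required $H$-factor. I expect the heart of the argument to be the nonvanishing of that coefficient: one must simultaneously exhibit an admissible squarefree monomial (the Hall-type computation above) and guarantee that the contributions to its coefficient cannot cancel, which is why it is convenient to work over $\mathbb{Q}$, where all the multinomial coefficients involved are positive.
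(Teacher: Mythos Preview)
The paper does not itself prove Theorem~\ref{thm-SV}; it is quoted as a result of Shirazi and Verstra\"ete~\cite{Shi08}, with the remark that their proof uses Alon's Combinatorial Nullstellensatz and that Frank, Lau and Szab\'o~\cite{FLS08} later gave an elementary proof via the orientation criterion stated as Theorem~\ref{thm-FLS}. Your proposal follows the first of these two routes, and the argument is correct: the Hall-type inequality you verify is exactly what is needed to exhibit a squarefree top-degree monomial of $Q$ with positive coefficient (all contributing multinomial terms are positive over~$\mathbb{Q}$), and since $Q$ is the top homogeneous part of $P$ this coefficient agrees with the one in $P$, so the Nullstellensatz applies with $S_e=\{0,1\}$.

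For comparison, the route the paper actually leans on later is the Frank--Lau--Szab\'o reduction: the hypothesis $|H(v)|>\lceil d_G(v)/2\rceil$ gives $|\{0,\dots,d_G(v)\}\setminus H(v)|\le\lfloor d_G(v)/2\rfloor$, and every graph admits an orientation with $d_G^-(v)\ge\lfloor d_G(v)/2\rfloor$ for all~$v$ (orient an Euler tour after adding a dummy vertex joined to all odd-degree vertices---precisely the trick invoked in the proof of Theorem~\ref{thm-main}); Theorem~\ref{thm-FLS} then yields the $H$-factor. Your Hall computation and this Eulerian orientation are doing essentially the same combinatorial work of assigning roughly half the incident edges to each vertex without conflict; the Nullstellensatz proof packages this assignment as a nonvanishing coefficient, while the Frank--Lau--Szab\'o approach packages it as an orientation and defers the algebraic part to the black box of Theorem~\ref{thm-FLS}.
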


Frank et al.~\cite{FLS08} found an elementary proof 
for Theorem~\ref{thm-SV} by using the next result on directed graphs.
For any directed graph~$G$,
denote by~$d_G^-(v)$ the in-degree of~$v$.

\begin{thm}[Frank et al.]\label{thm-FLS}
Let $G$ be a graph with an associated set function $H$.
If $G$ has an orientation for which
\begin{equation}\label{ineq-FLS}
d_G^-(v) \ge |\{0,\,1,\,\ldots,\,d_{G}(v)\}\backslash H(v)|\qquad\hbox{for all }v\in V(G),
\end{equation}
then $G$ has an $H$-factor.
\end{thm}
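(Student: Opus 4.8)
The plan is to argue by induction on $|E(G)|$, keeping fixed an orientation~$D$ of~$G$ that witnesses~(\ref{ineq-FLS}); for an arc $(u,v)$ of~$D$ I regard~$v$ as its head, so that $(u,v)$ contributes to $d_D^-(v)$. Abbreviate $\bar H(v)=\{0,1,\ldots,d_G(v)\}\setminus H(v)$, so the hypothesis reads $d_D^-(v)\ge|\bar H(v)|$ for every vertex~$v$. Call an arc $(u,v)$ \emph{critical} if its head satisfies all three of $d_D^-(v)=|\bar H(v)|$, $0\in H(v)$, and $d_G(v)\in H(v)$.

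I would first dispose of the case that \emph{every} arc of~$D$ is critical, which also settles the base case $E(G)=\emptyset$. In that case I claim the spanning subgraph with edge set $E(G)$, i.e.\ $G$ itself, is an $H$-factor. Indeed, if $d_D^-(v)\ge1$ then~$v$ is the head of some critical arc, so $d_G(v)\in H(v)$; and if $d_D^-(v)=0$ then the hypothesis forces $\bar H(v)=\emptyset$, i.e.\ $H(v)=\{0,1,\ldots,d_G(v)\}$, which again contains $d_G(v)$. Since the degree of~$v$ in this subgraph equals $d_G(v)$, we are done.

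Otherwise fix a non-critical arc $e=(u,v)$ and reduce to $G-e$, which has one fewer edge. There are two moves, according to which clause of criticality~$e$ violates. If $d_G(v)\notin H(v)$ or $d_D^-(v)>|\bar H(v)|$, I would \emph{forbid}~$e$: take $D-e$ as the orientation, keep $H$ off $\{u,v\}$, and put $H_1(x)=H(x)\cap\{0,1,\ldots,d_{G-e}(x)\}$ for $x\in\{u,v\}$. If instead $d_G(v)\in H(v)$, $d_D^-(v)=|\bar H(v)|$, and $0\notin H(v)$, I would \emph{require}~$e$: take $D-e$, keep $H$ off $\{u,v\}$, and put $H_1(x)=\{h-1\colon h\in H(x),\ h\ge1\}$ for $x\in\{u,v\}$. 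In either case one checks directly that $(G-e,\,D-e,\,H_1)$ again satisfies~(\ref{ineq-FLS}): outside the head~$v$ the in-degrees are unchanged while $|\bar H_1|$ does not grow, and at~$v$ the deleted in-arc~$e$ costs one unit of in-degree, paid for either by the resulting drop $|\bar H_1(v)|=|\bar H(v)|-1$ (which is exactly what $d_G(v)\notin H(v)$, respectively $0\notin H(v)$, provides) or by the strict slack $d_D^-(v)>|\bar H(v)|$. By induction $G-e$ has an $H_1$-factor $F_1$; then $F_1$ in the first move, and $F_1\cup\{e\}$ in the second, is an $H$-factor of~$G$, because $d_{F_1}(x)\in H_1(x)$ unwinds to the required degree of~$x$ lying in $H(x)$ at $x=u$ and $x=v$ (and is unchanged elsewhere).

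The substantive content, and the step I expect to demand the most care, is the bookkeeping of how $\bar H$ and $d_D^-$ shift at the two endpoints of~$e$: one must check that the ``forbid'' and ``require'' moves together cover every way an arc can fail to be critical, and that each move keeps~(\ref{ineq-FLS}) valid at \emph{all} vertices, not merely at~$u$ and~$v$. The definition of ``critical'' is reverse-engineered precisely so that a non-critical head can absorb the lost unit of in-degree — either via a matching decrease of $|\bar H|$ or via pre-existing slack; the rest is routine verification.
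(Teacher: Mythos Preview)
The paper does not prove Theorem~\ref{thm-FLS}; it is quoted as a known result of Frank, Lau and Szab\'o~\cite{FLS08} and used as a black box in the proof of Theorem~\ref{thm-main}. So there is no ``paper's own proof'' to compare against.

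Your argument is nonetheless correct and is, in fact, essentially the original Frank--Lau--Szab\'o proof: induction on $|E(G)|$, with an edge either forced in or forced out depending on whether the head can spare a unit of in-degree after the appropriate shift of $H$. Your bookkeeping is right. In the ``forbid'' branch, $|\bar H_1(v)|=|\bar H(v)|-[\,d_G(v)\notin H(v)\,]$, which together with the slack alternative absorbs the lost in-arc at~$v$; at~$u$ the in-degree is unchanged while $|\bar H_1(u)|\le|\bar H(u)|$. In the ``require'' branch, the bijection $j\mapsto j+1$ shows $|\bar H_1(v)|=|\bar H(v)\cap\{1,\ldots,d_G(v)\}|=|\bar H(v)|-1$ because $0\in\bar H(v)$, and similarly $|\bar H_1(u)|\le|\bar H(u)|$. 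The two branches do exhaust the ways an arc can be non-critical, and the all-critical base case yields $F=G$ as you argue. The one cosmetic point I would add is that $H_1$ is a legitimate set function on $G-e$ in both branches, i.e.\ $H_1(x)\subseteq\{0,\ldots,d_{G-e}(x)\}$; you use this implicitly but it deserves a line.
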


It seems that the existence of $H$-factors in regular graphs 
has not been extensively investigated yet. 
Let $G$ be a graph, and $H$ a set function associated with~$G$.
Denote
\begin{align*}
mH&=\min_{v\in G}\min H(v),\\[5pt]
MH&=\max_{v\in G}\max H(v).
\end{align*}
Here is the second result of this paper.

\begin{thm}\label{thm-main}
Let $G$ be an $\{r,\,r+1\}$-graph with an associated set function~$H$.
If $mH\ge1$, $MH\le r$ and
\begin{equation}\label{cond-large}
|H(v)|\ge {MH-mH+3\over 2}\quad\hbox{for all }v\in V(G),
\end{equation}
then $G$ has an $H$-factor.
\end{thm}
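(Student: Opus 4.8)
The plan is to realise the required $H$-factor as a $(g,f)$-factor for a well-chosen pair $g\le f$ with $[g(v),f(v)]\subseteq H(v)$, and then to verify Lov\'asz's criterion, Theorem~\ref{thm-Lovasz-gf}. Write $m=mH$ and $M=MH$; the hypotheses give $1\le m\le M\le r$, $d_G(v)\in\{r,r+1\}$ for every~$v$, and, since $M\ge m$, also $|H(v)|\ge 2$.

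The first step is combinatorial. Condition~(\ref{cond-large}) says $H(v)$ contains more than half of the $M-m+1$ integers in $[m,M]$; a pigeonhole argument then forces two consecutive integers into $H(v)$, so one can choose $g(v)<f(v)$ with $[g(v),f(v)]\subseteq H(v)$, always with $m\le g(v)<f(v)\le M\le r$ (and, when $H(v)\equiv\{k,k+1\}$, the choice is forced to $g\equiv k,\ f\equiv k+1$). The subtlety is that the selections must be globally compatible: an arbitrary assignment of consecutive pairs can be infeasible — in $K_{r+1}$, say, giving some vertices the pair $\{m,m+1\}$ and others $\{M-1,M\}$ admits no $(g,f)$-factor once $r$ is large. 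I would choose the intervals $[g(v),f(v)]$ as centrally located in $[m,M]$ as $H(v)$ allows (using that $H(v)$ meets both halves of $[m,M]$), so that the $f(v)$ are not all small while the $g(v)$ are all large, possibly after a short case analysis on $M-m$ and, for vertices lacking a long gap-free subinterval, replacing the interval by a longest step-$2$ progression inside $H(v)$ and using the parity characterisation, Theorem~\ref{thm-Lovasz-parity}, whose deficiency condition is of the same shape.

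With $g<f$ everywhere, Theorem~\ref{thm-Lovasz-gf} simplifies: no component $C$ of $G-S-T$ can have $g\equiv f$ on $V(C)$, so $q^*(S,T)=0$ and it suffices to prove
\[
\gamma(S,T)=\sum_{s\in S}f(s)+\sum_{t\in T}\bigl(d_G(t)-g(t)\bigr)-e_G(S,T)\ \ge\ 0
\]
for all disjoint $S,T\subseteq V(G)$. Since $f(s)\ge m+1\ge 2$ and $d_G(t)-g(t)\ge r-(M-1)\ge 1$, this is clear when $S=\emptyset$ or $T=\emptyset$.

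The remaining case is the main obstacle. I would assume disjoint $S,T$ with $\gamma(S,T)<0$, chosen to minimise $\gamma$. Minimality gives structure — deleting a vertex cannot decrease $\gamma$, so every $t\in T$ has at least $d_G(t)-g(t)\ge r-M+1$ neighbours in~$S$, every $s\in S$ has at least $f(s)\ge m+1$ neighbours in~$T$, and vertices outside $S\cup T$ have controlled adjacency to $S$ and to~$T$. One then counts the edges incident with~$T$ (equivalently, the edges leaving $S\cup T$) using $d_G(v)\in\{r,r+1\}$ together with $1\le m\le g(v)<f(v)\le M\le r$, to contradict $\gamma(S,T)<0$. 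This edge count, which must extract everything from the near-regularity of $G$ and from $H\subseteq\{1,\dots,r\}$, is the heart of the proof; it extends the counting behind Theorems~\ref{thm-Tutte} and~\ref{thm-Thomassen}, which it recovers when $H(v)\equiv\{k,k+1\}$ (and $G$ is regular, for Tutte's theorem). Given $\gamma\ge 0$, Theorem~\ref{thm-Lovasz-gf} yields the $(g,f)$-factor, which is the desired $H$-factor.
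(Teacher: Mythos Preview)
Your plan has a genuine gap: reducing to a $(g,f)$-factor with $f(v)=g(v)+1$ can fail even when the pair $\{g(v),f(v)\}$ is forced by $H(v)$. Take $G=K_7$ (so $r=6$), let three vertices carry $H(v)=\{1,2,4,6\}$ and the other four carry $H(v)=\{1,3,5,6\}$. Then $mH=1$, $MH=6$, and $|H(v)|=4=(6-1+3)/2$ for every~$v$, so the hypothesis~(\ref{cond-large}) holds. The only consecutive pair in $\{1,2,4,6\}$ is $\{1,2\}$, and the only one in $\{1,3,5,6\}$ is $\{5,6\}$; your scheme is forced to set $(g,f)=(1,2)$ on the first three vertices and $(g,f)=(5,6)$ on the last four. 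With $S$ the first three and $T$ the last four one gets
\[
\gamma(S,T)=3\cdot 2+4\cdot(6-5)-3\cdot 4=-2<0,
\]
so no $(g,f)$-factor exists, although an $H$-factor does (remove $K_4$ on the second block and $K_3$ on the first: degrees become $4,4,4,3,3,3,3$). This also shows your ``choose the pair centrally'' heuristic cannot be made to work in general, since here the pairs are pinned to the extremes. The suggested fallback to step-$2$ progressions and Theorem~\ref{thm-Lovasz-parity} does not obviously rescue the argument either, because different vertices would then carry progressions of different parities and you have not specified a single deficiency criterion that handles the mixture.

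The paper's proof avoids this trap by never discarding the bulk of $H(v)$. It first takes an $\{M,M+1\}$-factor $F$ of $G$ with the fewest edges (Theorem~\ref{thm-Thomassen}), so that the vertices of degree $M+1$ in $F$ are independent; a dedicated lemma (a sharpening of Theorem~\ref{thm-Tutte} proved via Theorem~\ref{thm-Lovasz-gf}) then extracts from $F$ an $\{m-1,m\}$-factor $F'$ with $d_{F'}(v)=m$ whenever $d_F(v)=M+1$, forcing $F''=F-F'$ to be an $\{M-m,M-m+1\}$-graph. Finally, an Eulerian orientation of $F''$ gives $d_{F''}^-(v)\ge\lfloor d_{F''}(v)/2\rfloor$, and condition~(\ref{cond-large}) is exactly what Theorem~\ref{thm-FLS} needs to produce an $H'$-factor of $F''$ for $H'(v)=H(v)-d_{F'}(v)$; adding back $F'$ yields the $H$-factor. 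The essential difference from your outline is that the final step uses all of $H(v)$ through the orientation criterion rather than a two-element window.
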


The proof of Theorem~\ref{thm-main} will be given in Section~\ref{sec-H-regg}.
As will be seen, the condition~(\ref{cond-large}) is sharp.
For the case 
\[
H(v)=\{k,\,k+1\}\qquad\hbox{for all }v\in V(G), 
\]
where $1\le k\le r-1$,
Theorem~\ref{thm-main} reduces to Theorem~\ref{thm-Thomassen}. 
Moreover, as a result restricting to $\{r,\,r+1\}$-graphs,
Theorem~\ref{thm-main} is stronger than Theorem~\ref{thm-SV}
because the condition~(\ref{cond-SV}) implies~(\ref{cond-large})
for $\{r,\,r+1\}$-graphs.

\section{Answer to Akbari-Kano's problem}\label{sec-prb-AK}

This section is concerned with Problem~\ref{prb-AK}. 
Note that $1\le k\le r/2-1$.
The following theorem deal with the case $k\le r/2-2$.
For any integer~$n$, 
denote by $[n]_{\mathrm{odd}}$ the set of positive odd integers less than or equal to $n$.
For any vertex~$v$ in any graph~$G$, 
denote by~$N_G(v)$ the neighborhood of~$v$ in~$G$. 

\begin{thm}\label{thm-r/2-2}
For any even integer $r$,
there exists an $r$-regular graph $G^*$ of even order such that
$G^*$ has no $H^*$-factors where
\[
H^*=[r]_{\mathrm{odd}}\Big\backslash\Bigl\{{r\over 2}-1,\ {r\over 2},\ {r\over 2}+1\Bigr\}.
\]
In particular, $G^*$ has no $\{k,\,r-k\}$-factors for any odd integer $k$ such that $1\le k\le r/2-2$.
\end{thm}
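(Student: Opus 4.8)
The plan is to construct $G^*$ explicitly by gluing together many small rigid ``gadgets'' around a central structure, in such a way that the only feasible degrees at the gluing vertices are forced to lie in the forbidden window $\{r/2-1,\,r/2,\,r/2+1\}$. The natural first step is to design a gadget $B$: a graph containing a distinguished vertex (or a small set of distinguished vertices) $u$ of degree $r-1$ inside $B$, all other vertices having degree $r$ inside $B$, with the property that in any spanning subgraph $F$ of $B$ in which every non-distinguished vertex has odd degree, the degree $d_F(u)$ is forced to have a prescribed parity or to lie in a prescribed small set. One then takes $G^*$ to be built from a central vertex $w$ (or central clique) of degree $r$ joined to $r$ copies of $B$ via their distinguished vertices, so that $d_{G^*}(u)=r$ for each such $u$ and $G^*$ is genuinely $r$-regular of even order. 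If $G^*$ had an $H^*$-factor $F$, then $d_F(v)$ is odd for every $v$, and inside each gadget the forcing argument pins down how many of the $r$ edges at $w$ must be used, leading to a contradiction because that number cannot simultaneously satisfy all the parity/counting constraints unless some $d_F(u)$ lands in the excluded window.

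The key steps, in order, are: (1) isolate the combinatorial identity that makes odd-factors restrictive — namely that if every vertex of a graph $B'$ has odd degree in a subgraph $F$, then $|E(F)|\equiv$ (number of vertices)$/2$-type parity constraints hold on $B'$, and more usefully, a counting of edges across a cut of $B'$ is congruent to the number of vertices on one side modulo $2$; (2) use this to design $B$ so that the parity of $d_F(u)$ is fixed (say, forced to be odd but also forced, by an edge-count across a carefully chosen cut, to be congruent to $r/2$ mod something, squeezing it toward $r/2$); (3) assemble $r$ such gadgets around the center and verify $r$-regularity and even order by a direct vertex/edge count; (4) run the global parity argument: assume $F$ is an $H^*$-factor, apply the gadget-level constraints to each copy of $B$, sum them up, and derive that some gadget vertex must have $F$-degree in $\{r/2-1,\,r/2,\,r/2+1\}$, contradicting $d_F(u)\in H^*$. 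The final sentence of the statement is then immediate: a $\{k,\,r-k\}$-factor with $k$ odd and $1\le k\le r/2-2$ is in particular an $H^*$-factor, since both $k$ and $r-k$ are odd and both avoid the window $\{r/2-1,\,r/2,\,r/2+1\}$.

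The hard part will be step (2): engineering the gadget $B$ so that the forced degree at the distinguished vertex is not merely of a fixed parity but is actually confined to the three-element window, while simultaneously keeping $B$ $r$-regular except at $u$ and keeping $|B|$ even (or of the right parity to make $|G^*|$ even). A clean way to get the squeeze is to make $B$ itself consist of a dense ``core'' on roughly $r/2+O(1)$ vertices each of internal degree close to $r$, so that the number of edges from the core out to $u$ is sandwiched between $r/2-1$ and $r/2+1$ purely by degree arithmetic once oddness is imposed on the core vertices; I would look for $B$ along the lines of a near-complete bipartite or near-complete graph on $r/2+c$ vertices with a few edges subdivided or a few extra pendant structures to fix parities. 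I expect the regularity and even-order bookkeeping (step 3) to be routine once $B$ is fixed, and the global argument (step 4) to be a short averaging/parity computation; essentially all the content is in choosing $B$ correctly. As a sanity check, the construction must also genuinely allow some odd degree outside the window at $u$ to be impossible — so I would verify directly, for small $r$ (say $r=8$, $k=3$), that the proposed $G^*$ has no $\{3,5\}$-factor, before trusting the general construction.
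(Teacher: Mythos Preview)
Your overall architecture (odd-order gadgets glued to a small center, then a parity count on edges crossing into each gadget) is exactly right, and matches the paper. But the proposal stops short of a proof: step~(2), the gadget, is left as a wish, and the guesses you make about it point in the wrong direction. You are looking for a dense core on roughly $r/2+O(1)$ vertices with a single distinguished vertex and a single outgoing edge; neither feature is what makes the construction work.

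What the paper actually does is much simpler and worth absorbing. The gadget is just $J=K_{r+1}$ minus one edge, so $|J|=r+1$ is odd and the two endpoints of the deleted edge have degree $r-1$ inside~$J$. There are \emph{two} central vertices $u,v$ (not one), and each of the $r$ copies $J_i$ is attached by its two deficient vertices: for $r/2-1$ of the copies both attachment edges go to~$u$, for another $r/2-1$ copies both go to~$v$, and the last two copies send one edge to each. The parity step is the one you identified: since $|J_i|$ is odd and every $F$-degree is odd, the number of $F$-edges leaving $J_i$ is odd. The punchline is then immediate: a copy with both attachment edges going to $u$ contributes exactly one $F$-edge to~$u$, so $d_F(u)$ is $r/2-1$ plus whatever the two ``mixed'' copies contribute, hence $d_F(u)\in\{r/2-1,\,r/2,\,r/2+1\}$.

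The idea you were missing is precisely this: send \emph{two} edges from a gadget to the same central vertex, so that ``odd number of crossing edges'' becomes ``exactly one crossing edge''. With one edge per gadget, parity only tells you that edge is used or not, and you cannot manufacture the $\pm 1$ window. The second central vertex is there only to soak up the other attachment edges and keep the whole graph $r$-regular of even order; there is no delicate internal structure to engineer.
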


\begin{proof}
Let~$J$ be the graph obtained by removing an edge from the complete graph~$K_{r+1}$. 
Let $J_1$, $J_2$, $\ldots$, $J_r$ be pairwise disjoint copies of~$J$. 
In each copy $J_i$,
let $a_i$ and $b_i$ be the ends of the edge that removed from $K_{r+1}$.
Let $G^*$ be the graph consisting of the copies $J_1$, $J_2$, $\ldots$, $J_r$,
together with two new vertices~$u$ and~$v$, 
such that 
\begin{align}
N_{G^*}(u)&=\bigl\{a_1,\ b_1,\ a_2,\ b_2,\ \ldots,\ a_{{r\over 2}-1},\ b_{{r\over 2}-1},\ a_{r-1},\ a_r\bigr\},
\label{Neighbor-u}\\[5pt]
N_{G^*}(v)&=\bigl\{a_{r\over 2},\ b_{r\over 2},\ a_{{r\over 2}+1},\ b_{{r\over 2}+1},\ \ldots,\ a_{r-2},\ b_{r-2},\ b_{r-1},\ b_r\bigr\}.\notag
\end{align}
Then~$G^*$ is an $r$-regular graph of the even order $r(r+1)+2$.

Now we shall show that $G^*$ has no $H^*$-factors.
Suppose to the contrary that $F$ is an $H^*$-factor of~$G^*$.
Let $1\le i\le r$.
Since $d_F(w)$ is odd for all $w\in J_i$, and the order~$|J_i|$ is odd, we find 
\begin{equation}\label{eq-F}
\sum_{w\in J_i}d_F(w)\equiv1\pmod2.
\end{equation}
Let $F_i$ be the subgraph of $F$ induced by the vertices in $J_i$.
By the Handshaking theorem, we have
\begin{equation}\label{eq-Fi}
\sum_{w\in J_i}d_{F_i}(w)\equiv0\pmod2.
\end{equation}
Taking the difference between~(\ref{eq-F}) and~(\ref{eq-Fi}), we obtain 
\[
e_F(J_i,\,\{u,v\})=\sum_{w\in J_i}\bigl(d_F(w)-d_{F_i}(w)\bigr)\equiv1\pmod2.
\]
Since $e_{G^*}(J_i,\,u)=2$ and $e_{G^*}(J_i,\,v)=0$ for $1\le i\le r/2-1$, we derive 
\[
e_F(J_i,\,u)=1\qquad\hbox{for }1\le i\le {r\over 2}-1.
\] 
By the definition~(\ref{Neighbor-u}) of~$N_{G^*}(u)$, we get
\[
d_{F}(u)\in\Bigl\{{r\over 2}-1,\ {r\over 2},\ {r\over 2}+1\Bigr\},
\]
contradicting the definition of~$H^*$. This completes the proof.
\end{proof}

The graph $G^*$ constructed above will be used to explain the sharpness
of the condition~(\ref{cond-large}) in the next section.
Now we cope with the case $k=r/2-1$.

\begin{thm}\label{thm-r/2-1}
Let $r$ be an even integer such that $r/2$ is even. 
Then any connected $r$-regular graph of even order
has an $\{r/2-1,\,r/2+1\}$-factor.
\end{thm}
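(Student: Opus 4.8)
The plan is to apply Lov\'asz's parity-factor characterization (Theorem~\ref{thm-Lovasz-parity}) with $g(v)=r/2-1$ and $f(v)=r/2+1$ for all $v$, which is legitimate since $r/2$ is even forces $g\equiv f\equiv0\pmod2$, and an $\{r/2-1,r/2+1\}$-factor is exactly a $(g,f)$-parity-factor here. So it suffices to verify $\eta(S,T)\ge0$ for all disjoint $S,T\subseteq V(G)$. Writing $a=r/2-1$, we have $f(s)=a+2$ and $d_G(t)-g(t)=r-a=a+2$, so
\[
\eta(S,T)=(a+2)\bigl(|S|+|T|\bigr)-e_G(S,T)-q(S,T),
\]
where $q(S,T)$ counts components $C$ of $G-S-T$ with $\sum_{c\in V(C)}f(c)+e_G(V(C),T)\equiv1\pmod 2$, i.e. (since $f$ is constant and even) with $e_G(V(C),T)\equiv1\pmod2$.

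First I would dispose of the easy range. If $|S|+|T|\ge2$, then since $G$ is $r$-regular we have $e_G(S,T)\le r\min(|S|,|T|)$ and, cruder still, each component $C$ of $G-S-T$ contributes at most its edge-count to $T$, so $q(S,T)$ is bounded by the number of such components; a counting argument comparing $e_G(S,T)+q(S,T)$ against $(a+2)(|S|+|T|)$ should close this case, the point being that $a+2=r/2+1$ is comfortably more than half of $r$. The genuinely delicate cases are the small ones: $S=T=\emptyset$ (then $\eta=-q(\emptyset,\emptyset)$, so I must show $G$ itself has no ``odd'' component in the relevant sense — here $e_G(V(C),T)=e_G(V(G),\emptyset)=0$ is even, so $q=0$ and $\eta=0$, using connectedness), and the cases $|S|+|T|=1$. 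When $S=\{s\}$, $T=\emptyset$: $\eta=(a+2)-q(\{s\},\emptyset)$, and $q$ counts components $C$ of $G-s$ with $e_G(V(C),\emptyset)=0$ even, hence $q=0$; fine. When $S=\emptyset$, $T=\{t\}$: $\eta=(a+2)-q(\emptyset,\{t\})$, where $q$ counts components $C$ of $G-t$ with $e_G(V(C),\{t\})$ odd; each such $C$ uses at least one edge to $t$, and $\sum_C e_G(V(C),\{t\})=d_G(t)=r$, so $q\le r$, which is not immediately enough — but the components with an \emph{odd} number of edges to $t$ must number at most $r=2a+2$, and in fact the parity constraint together with $d_G(t)=r$ even forces the number of odd-contribution components to be even, hence $\le 2a$ unless all $r$ edges go to distinct components; I would need a short argument (possibly invoking that $G-t$ has at most $r$ components and parity) to get $q\le a+2$. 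This is the step I expect to be the main obstacle: controlling $q(S,T)$ sharply when $|S|+|T|$ is small, particularly $|T|=1$.

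A cleaner route for the small-$|S|+|T|$ obstruction may be to bound $q(S,T)$ not by the number of components but by observing that every component contributing to $q$ sends an odd (hence $\ge1$) number of edges to $T$, so $q(S,T)\le e_G(V(G)\setminus(S\cup T),\,T)\le \sum_{t\in T}d_G(t)=r|T|$, and then split the sum $\eta$ as $\sum_{s\in S}(a+2)+\sum_{t\in T}\bigl((a+2)-(\text{edges from }t\text{ to }G-S-T)\bigr)-e_G(S,T)+(\text{over-count corrections})$; regrouping per vertex of $T$ and using $d_G(t)=r=2a+2$ lets each $t\in T$ absorb its own contribution to $e_G(S,T)$ and to $q$. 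The regrouping identity $\sum_{t\in T}d_G(t)=e_G(S,T)+2e_G(T,T)+e_G(T,\,G-S-T)$ is what makes $a+2=r/2+1$ exactly the right threshold, and I would present the verification of $\eta\ge0$ by this per-vertex charging, handling the finitely many degenerate configurations ($|S\cup T|\le1$, or $T$ meeting a single component) by direct inspection as above. Finally I would remark that connectedness is used only to rule out $q(\emptyset,\emptyset)>0$ and is genuinely needed (a disjoint union of two odd-order $r/2+1$-edge-connected pieces could fail).
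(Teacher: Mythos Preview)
Your setup is right --- apply Theorem~\ref{thm-Lovasz-parity} with $g\equiv r/2-1$, $f\equiv r/2+1$ --- but there is a parity slip that derails the small-case analysis: since $r/2$ is even, both $g$ and $f$ are \emph{odd}, not even. Hence the condition~(\ref{eq-mod}) defining $q(S,T)$ reads $|V(C)|+e_G(V(C),T)\equiv1\pmod2$, not merely $e_G(V(C),T)\equiv1$. So for $S=\{s\}$, $T=\emptyset$ you cannot conclude $q=0$; you must count odd-order components of $G-s$, and there can be many. Similarly your treatment of $S=T=\emptyset$ happens to reach the right conclusion (here even order of $G$ is genuinely used) but for the wrong reason.

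More seriously, the main estimate is missing. Your proposed bound $q(S,T)\le e_G(T,\,V(G)\backslash S\backslash T)$ is too weak: substituting it and using $r|T|=2e_G(T,T)+e_G(S,T)+e_G(T,\,V(G)\backslash S\backslash T)$ gives only $\eta\ge (r/2+1)|S|+(1-r/2)|T|+2e_G(T,T)$, which is negative for $S=\emptyset$ and $T$ an independent set once $r\ge4$. The idea you are not using is that a connected even-regular graph is $2$-edge-connected, so every component $C$ of $G-S-T$ (when $\emptyset\ne S\cup T\ne V(G)$) sends at least two edges to $S\cup T$; this yields $2q(S,T)\le e_G(S\cup T,\,V(G)\backslash S\backslash T)$. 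With that single inequality and the degree-sum identity for $S\cup T$, the computation
\[
\eta(S,T)\ \ge\ \tfrac{r}{2}\bigl(|S|+|T|\bigr)-e_G(S,T)-\tfrac{1}{2}e_G\bigl(S\cup T,\,V(G)\backslash S\backslash T\bigr)=e_G(S,S)+e_G(T,T)\ge0
\]
closes every case at once, with no separate treatment of small $|S\cup T|$.
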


\begin{proof}
We shall apply Theorem~\ref{thm-Lovasz-parity} by setting 
$g(v)=r/2-1$ and $f(v)=r/2+1$
for all vertices~$v$.
Let $G$ be a connected $r$-regular graph of even order.
Let~$S$ and~$T$ be disjoint subsets of $V(G)$.
First, we claim that
\begin{equation}\label{ineq-2q}
e_G(S\cup T,\, V(G)\backslash S\backslash T)\ge 2\,q(S,T).
\end{equation}
In fact, if $S\cup T\in\{\emptyset,G\}$, then 
$q(S,T)=0$,
and (\ref{ineq-2q}) follows immediately. 
Otherwise, let $C$ be a component of $G-S-T$. 
Then both $S\cup T$ and $C$ are nonempty.
Note that any even-regular graph is 2-edge-connected.
So $G$ is 2-edge-connected. In particular, we have
\[
e_G(S\cup T,\,C)\ge2.
\]
Summing the above inequality over all components $C$, 
we get the desired inequality~(\ref{ineq-2q}). 
Hence,
\begin{align*}
\eta(S, T)
&=\Bigl({r\over 2}+1\Bigr)\bigl(|S|+|T|\bigr)-e_G(S,T)
-q(S,T)\\[5pt]
&\ge{r\over 2}\bigl(|S|+|T|\bigr)-e_G(S,T)
-\frac{1}{2}e_G(S\cup T,\,V(G)\backslash S\backslash T)\\[5pt]
&=e_G(S,S)+e_G(T,T)\ge 0.
\end{align*}
By Theorem~\ref{thm-Lovasz-parity}, $G$ has an $\{r/2-1,\,r/2+1\}$-factor. 
\end{proof}

Combining Theorems~\ref{thm-r/2-2} and~\ref{thm-r/2-1},
we obtain a negative answer to Problem~\ref{prb-AK}.

\section{The existence of $H$-factors in regular graphs}\label{sec-H-regg}

This section is devoted to establish Theorem~\ref{thm-main}.
A subset $U$ of $V(G)$ is called {\it independent} if any two vertices in $U$
are not adjacent in~$G$. We need the following lemma to prove Theorem~\ref{thm-main}.

\begin{lem}\label{lem-U}
Let $r$ and $k$ be positive integers such that $1\le k\le r-1$. 
Let $G$ be an $\{r,\,r+1\}$-graph and
\[
U=\{v\in V(G)\ |\ d_G(v)=r+1\}.
\]
If $U$ is independent,
then $G$ has a $\{k,\,k+1\}$-factor $F$
such that 
\[
d_F(u)=k+1\qquad\hbox{as if}\quad u\in U.
\]
\end{lem}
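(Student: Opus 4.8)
The plan is to apply Lov\'asz's $(g,f)$-factor theorem (Theorem~\ref{thm-Lovasz-gf}) to a cleverly chosen pair of functions that simultaneously forces the degree-$(k+1)$ behaviour on~$U$ and keeps the flexibility of a $\{k,k+1\}$-factor on $V(G)\setminus U$. Concretely, I would set $g(v)=f(v)=k+1$ for every $v\in U$, and $g(v)=k$, $f(v)=k+1$ for every $v\notin U$. A $(g,f)$-factor of~$G$ for this choice is exactly a $\{k,k+1\}$-factor with $d_F(u)=k+1$ on all of~$U$, so it suffices to verify the inequality $\gamma(S,T)\ge0$ for all disjoint $S,T\subseteq V(G)$.

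The main computation is to bound $\gamma(S,T)=\sum_{s\in S}f(s)+\sum_{t\in T}(d_G(t)-g(t))-e_G(S,T)-q^*(S,T)$ from below. I would split each of $S$ and~$T$ according to membership in~$U$, writing $S=S_0\cup S_1$ and $T=T_0\cup T_1$ with $S_1=S\cap U$, etc., and observe that since $U$ is independent, there are no edges inside $S_1$, inside $T_1$, or between $S_1$ and $T_1$; every vertex of $S_1\cup T_1$ has degree $r+1$, every vertex of $S_0\cup T_0$ has degree $r$ or $r+1$. The contribution of each $s\in S$ is $f(s)\ge k$ (with the bonus $k+1$ on $S_1$), and of each $t\in T$ is $d_G(t)-g(t)\ge r-(k+1)$ on $T_0$ and $=(r+1)-(k+1)=r-k$ on $T_1$. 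Since $1\le k\le r-1$, both of these are nonnegative. The key point is to handle $q^*(S,T)$: by definition $q^*(S,T)$ only counts components $C$ of $G-S-T$ on which $g\equiv f$, i.e.\ components entirely contained in~$U$; but $U$ is independent, so such a component is a single vertex $w\in U$ with all $r+1$ of its edges going into $S\cup T$, hence $e_G(\{w\},S\cup T)=r+1$. This lets me charge each unit of $q^*(S,T)$ against the large edge-count such a component sends into $T$ (or into $S$), exactly as in the proof of Theorem~\ref{thm-r/2-1}, and absorb it into the positive terms $\sum_{t\in T}(d_G(t)-g(t))$ and $-e_G(S,T)$.

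Assembling these estimates, I expect a chain of inequalities of the shape
\[
\gamma(S,T)\ \ge\ k|S|+(r-k-1)|T|+|T\cap U|-e_G(S,T)-q^*(S,T)\ \ge\ 0,
\]
where the first step uses the degree bounds above and the second uses that $e_G(S,T)\le e_G(S,V\setminus S)\le (r+1)|S|$ is far too crude — so instead I would argue locally: group the edges at each vertex of~$T$, noting each $t\in T$ has at most $d_G(t)$ incident edges, of which those counted in $e_G(S,T)$ and those reaching a singleton component in $U$ are all accounted for by $d_G(t)-g(t)=d_G(t)-k$ (or $-k-1$), leaving a surplus because $g(t)\le k+1\le r\le d_G(t)$... the bookkeeping mirrors the displayed computation in Theorem~\ref{thm-r/2-1} almost verbatim. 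The hard part will be the careful edge-accounting around $q^*(S,T)$: one must make sure that each singleton component $\{w\}\subseteq U$ contributing to $q^*$ really can be paid for — I would show $e_G(\{w\},T)\ge1$ or $e_G(\{w\},S)\ge1$ for such $w$ (since $w$ has degree $r+1\ge2$ and is cut off from the rest of $G-S-T$), and more sharply that $\sum_{w} e_G(\{w\},S\cup T)\ge 2q^*(S,T)$ since $G$ is $2$-edge-connected (being an $\{r,r+1\}$-graph with $r\ge1$, hence of minimum degree $\ge1$; the relevant case $r\ge2$ gives $2$-edge-connectivity or one treats $r=1$ trivially). That factor-$2$ slack is precisely what compensates both the $-q^*$ term and any deficit from vertices of~$T$ having degree exactly~$r$ rather than $r+1$, yielding $\gamma(S,T)\ge0$ and the lemma.
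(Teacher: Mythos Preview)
Your setup is exactly the paper's: define $f\equiv k+1$ and $g=k+1$ on $U$, $g=k$ off $U$, and apply Theorem~\ref{thm-Lovasz-gf}. You also correctly observe that any component counted by $q^*(S,T)$ must lie inside $U$ and hence, by independence, is a single vertex $w$ with all $r+1$ edges going to $S\cup T$. Where your proposal breaks down is the step that disposes of~$q^*$.

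First, a small correction: an $\{r,r+1\}$-graph need not be $2$-edge-connected (for $r=2$, glue two copies of $K_4$ minus an edge along a bridge), so that appeal is unfounded. It is harmless here only because each $q^*$-singleton already sends $r+1\ge2$ edges into $S\cup T$, so you get the inequality $\sum_w e_G(\{w\},S\cup T)\ge 2q^*$ for free. The real problem is that this inequality does not do the work you want it to do. Those edges land in $S\cup T$ but are \emph{not} counted in $e_G(S,T)$; they consume degree at vertices of $S$ and $T$ without appearing on the negative side of~$\gamma$, so there is no evident ``slack'' to charge~$q^*$ against. Your displayed chain $\gamma(S,T)\ge k|S|+(r-k-1)|T|+|T\cap U|-e_G(S,T)-q^*(S,T)\ge0$ is left unjustified at the second inequality, and the analogy with Theorem~\ref{thm-r/2-1} is misleading: there one had $\eta=(\tfrac r2+1)(|S|+|T|)-e_G(S,T)-q$ and could afford to spend a full $|S|+|T|$ on the $q$-term because the remaining $\tfrac r2(|S|+|T|)$ already dominates $e_G(S,T)+\tfrac12 e_G(S\cup T,\overline{S\cup T})$ by a clean degree count. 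Here the coefficients $(k+1)$ and $(r-k)$ are asymmetric and neither need exceed $r/2$, so that balance fails.

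The paper handles $q^*$ by a different, sharper idea that you are missing: take $(S,T)$ with $\gamma(S,T)<0$ and $S\cup T$ \emph{maximal}. For a $q^*$-singleton~$w$, the parity condition~(\ref{eq-mod}) forces $e_G(w,T)\ne k+1$. If $e_G(w,T)\le k$ one checks $\gamma(S,T\cup\{w\})\le\gamma(S,T)$; if $e_G(w,T)\ge k+2$ one checks $\gamma(S\cup\{w\},T)\le\gamma(S,T)$. Either way maximality is contradicted, so in fact $q^*(S,T)=0$. Only then does the fractional edge-counting
\[
\gamma(S,T)\ \ge\ \sum_{\substack{s\in S,\;t\in T\\ st\in E(G)}}\Bigl(\tfrac{k+1}{d_G(s)}-\tfrac{g(t)}{d_G(t)}\Bigr)\ \ge\ 0
\]
go through. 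Your proposal never isolates the dichotomy $e_G(w,T)\lessgtr k+1$, and without it the $q^*$-term cannot be absorbed.
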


\begin{proof} 
Let $f(v)=k+1$ 
for all vertices~$v$, and
\[
g(v)=\begin{cases}
k+1,&\hbox{if }v\in U,\\[5pt]
k,&\hbox{otherwise}.
\end{cases}
\]
It suffices to show that $G$ has a $(g,f)$-factor.
Suppose to the contrary that $G$ has no $(g,f)$-factors.
By Theorem~\ref{thm-Lovasz-gf}, we have
\[
\gamma(S,T)<0\qquad\hbox{for some }S,T\subseteq V(G).
\]
Let~$S$ and~$T$ be disjoint subsets of~$V(G)$ such that $\gamma(S, T)<0$ 
and the set $S\cup T$ is maximal.
We claim that $q^*(S,T)=0$.

Suppose to the contrary that $q^*(S,T)\ge1$. 
Let~$C$ be a component of $G-S-T$ counted by~$q^*(S,T)$.
It follows that
\begin{equation}\label{eq-component}
e_G(C,\,G-S-T)=0.
\end{equation}
By the definition of $q^*(S,T)$, we have
\begin{equation}\label{eq-g=f}
g(v)=f(v)=k+1\qquad\hbox{for all }v\in V(C).
\end{equation}
So $V(C)\subseteq U$. But $U$ is independent, 
we deduce that $C$ is a single vertex, say, $V(C)=\{a\}$.
Let $S'=S\cup\{a\}$ and $T'=T\cup\{a\}$. 
Then~(\ref{eq-component}) implies 
\begin{align}
q^*(S',T)=q^*(S,T)-1,\label{eq-q*S'}\\[5pt]
q^*(S,T')=q^*(S,T)-1.\label{eq-q*T'}
\end{align}
Note that the condition~(\ref{eq-mod}) implies $e_G(a,T)\ne k+1$.
If $e_G(a,T)\le k$, then~(\ref{eq-component}) and~(\ref{eq-g=f}) yield
\[
d_G(a)-e_G(a,S)=e_G(a,T)\le g(a)-1.
\]
Together with~(\ref{eq-q*T'}), we have
\[
\gamma(S,T')-\gamma(S,T)
=d_G(a)-g(a)-e_G(S,a)-q^*(S, T')+q^*(S,T)\le 0.
\]
So $\gamma(S,T')<0$, contradicting the maximality of $S\cup T$.
Otherwise $e_G(a,T)\ge k+2$. By~(\ref{eq-q*S'}), we deduce
\[
\gamma(S',T)-\gamma(S,T)
=f(a)-e_G(a,T)-q^*(S', T)+q^*(S, T) 
\le 0.
\]
So $\gamma(S',T)<0$, contradicting, again, the maximality of $S\cup T$.
Thus the claim is true.

Now we can deduce 
\begin{align*}
\gamma(S, T)
&=\sum_{s\in S}d_{G}(s)\frac{f(s)}{d_{G}(s)}+
\sum_{t\in T}d_{G}(t)\left(1-\frac{g(t)}{d_{G}(t)}\right)-e_G(S,T)\\
&\ge\sum_{\substack{ s\in S, \ t\in T\\ st\in E(G)}}
\left(\frac{f(s)}{d_{G}(s)}+\Bigl(1-\frac{g(t)}{d_{G}(t)}\Bigr)\right)
-e_G(S,T)\\
&=\sum_{\substack{ s\in S, \ t\in T\\ st\in E(G)}}
\left(\frac{k+1}{d_{G}(s)}-\frac{g(t)}{d_{G}(t)}\right)\\
&\ge \sum_{\substack{ x\in S, \ y\in T\\ xy\in E(G)}}
\left(\frac{k+1}{r+1}-\max\left(\frac{k}{r},\frac{k+1}{r+1}\right)\right)=0,
\end{align*}
contradicting the hypothesis $\gamma(S,T)<0$.
This completes the proof. 
\end{proof}

We remark that Lemma~\ref{lem-U} is a generalization of Theorem~\ref{thm-Tutte}.
Now we are in a position to prove Theorem~\ref{thm-main}.

\begin{proof}
Write $m=mH$ and $M=MH$ for short.
By Theorem~\ref{thm-Thomassen},
we can suppose that $F$ is an $\{M,\,M+1\}$-factor of $G$ 
with the minimum number of edges.
It follows that any two vertices of degree $M+1$ in $F$, if they exist, are not adjacent.
By Lemma~\ref{lem-U}, 
$F$ has an $\{m-1,\,m\}$-factor, say,~$F'$,
such that 
\begin{equation}\label{eq-F'}
d_{F'}(v)=m\quad\hbox{as if}\quad d_F(v)=M+1.
\end{equation}
Let $F''$ be the complemented graph of $F'$ in $F$. 
In view of~(\ref{eq-F'}),
we have
\begin{equation}\label{deg-F''}
d_{F''}(v)\in\{M-m,\,M-m+1\}\qquad\hbox{for all }v.
\end{equation}
We observe that $F''$ has an orientation such that 
\begin{equation}\label{ineq-indegree}
d_{F''}^-(v)\ge\Bigl\lfloor{d_{F''}(v)\over 2}\Bigr\rfloor
\qquad\hbox{for all }v.
\end{equation}
This can be seen by orienting an eulerian tour of the graph that obtained from~$F''$ 
by adding a new vertex and joining it to all vertices of odd degree in~$F''$.
Let
\[
H'(v)=\{h-d_{F'}(v)\ |\ h\in H(v)\}\qquad\hbox{for all }v.
\]
Then the condition~(\ref{cond-large}) reads 
\begin{equation}\label{ineq-H'}
|H'(v)|=|H(v)|\ge{M-m+3\over 2}.
\end{equation}
By~(\ref{deg-F''}), (\ref{ineq-indegree}) and~(\ref{ineq-H'}),
it is easy to verify that
\[
|\{0,\,1,\,\ldots,\,d_{F''}(v)\}\backslash H'(v)|\le d_{F''}^-(v)\qquad\hbox{for all }v.
\]
By Theorem~\ref{thm-FLS}, the graph~$F''$ has an $H'$-factor, say, $G'$.
Hence, 
the graph induced by the edge set $E(F')\cup E(G')$ is an $H$-factor of $G$. 
This completes the proof. 
\end{proof}

In fact, the condition~(\ref{cond-large}) is sharp.
For instance,
when $r$ is even,
let $G^*$ be the graph constructed in the proof of Theorem~\ref{thm-r/2-2}.
Consider a set $H$ of the form
\[
H=\{m,\,m+2,\,m+4,\,\ldots,\,M\},
\]
where both $m$ and $M$ are odd, and $M\le r/2-2$. 
On one hand, $G^*$ has no $H$-factors by Theorem \ref{thm-r/2-2}.
On the other hand, it is straightforward to compute
\[
|H|={M-m+2\over 2}.
\]
Comparing it with the condition~(\ref{cond-large}),
we deduce the latter one is sharp.
For other possibilities of the associated set~$H$, 
for example, $mH+MH$ is odd, we mention that
it is also not hard to find $r$-regular graphs without $H$-factors such that 
\[
|H(v)|=\left\lfloor{MH-mH+2\over 2}\right\rfloor\qquad\hbox{for all }v\in V(G).
\]

\noindent{\bf Acknowledgments.} 
Lu was supported by the National Natural Science Foundation of China (Grant No. 11101329) and the Fundamental Research Funds for the Central Universities. Wang was supported by the National Natural Science Foundation of China (Grant No. 11101010). 
We are grateful to Mikio Kano for sharing the $\{k,\,r-k\}$-factor problem with us.

\end{document}